\documentclass[11pt,reqno]{amsart}

\usepackage[a4paper,margin=30mm]{geometry}
\usepackage[T1]{fontenc}
\usepackage{lmodern}
\usepackage{microtype}
\usepackage{enumitem}
\usepackage[colorlinks=true,linkcolor=blue,citecolor=blue,urlcolor=blue]{hyperref}
\usepackage{amsbsy}
\usepackage{amsrefs}
\usepackage{mathrsfs}
\usepackage{ourbib}
\usepackage{overpic}
\usepackage{esint}

\newtheorem{theorem}{Theorem}

\newcounter{theoremBprimeCounter}
\newtheorem{theoremBprime}[theoremBprimeCounter]{Theorem}

\newtheorem{corollary}{Corollary}
\newtheorem{lemma}[corollary]{Lemma}

\newtheorem*{conjecture}{Conjecture}
\theoremstyle{definition}
\newtheorem{remark}[corollary]{Remark}

\DeclareMathOperator{\AreaOp}{Area}
\DeclareMathOperator{\LengthOp}{Length}
\DeclareMathOperator{\ScalOp}{scal}

\DeclareMathOperator{\intOp}{int}

\newcommand{\R}{\mathbb{R}}
\newcommand{\Z}{\mathbb{Z}}

\newcommand{\tg}{\tilde{g}}
\newcommand{\vol}{\operatorname{vol}}

\title[Gromov's mean-of-the-mean-curvature conjecture]{The lower mean curvature bound in Gromov's mean-of-the-mean-curvature conjecture}
\author{Christian Bär}
\address{Universität Potsdam, Institut für Mathematik, 14476 Potsdam, Germany}
\email{\href{mailto:christian.baer@uni-potsdam.de}{christian.baer@uni-potsdam.de}}
\urladdr{\url{https://www.math.uni-potsdam.de/baer/}}
\date{\today}
\keywords{mean curvature, scalar curvature, warped product, hyperbolic metric, Gauss-Bonnet theorem, isoperimetric inequality}
\subjclass[2020]{53C20}

\begin{document}
\maketitle

\begin{abstract}
Gromov conjectured that for a compact Riemannian manifold $X$ with boundary, the total mean curvature $\int_{\partial X} H$ is bounded above by a constant depending only on the intrinsic geometry of $\partial X$ and a lower bound on the scalar curvature of $X$.
Previous results towards this conjecture require, in addition, a lower bound on the mean curvature of the boundary.
In the present paper, we investigate whether this extra assumption is necessary.

In dimension $2$, we show that no lower bound on the geodesic curvature is needed.
We estimate the total geodesic curvature of the boundary in terms of its length and a lower bound for the Gauss curvature of the surface.
This confirms Gromov's conjecture in $2$~dimensions without any extra assumptions.

In contrast, we give examples showing that a lower bound on the mean curvature is genuinely needed in dimensions $n \ge 3$.
\end{abstract}

\section{Introduction}

The interplay of the scalar curvature of a compact Riemannian manifold with boundary and the mean curvature of its boundary is a recurrent theme in Gromov's geometric work in recent years, see \cites{Gromov2018,Gromov2019,Gromov2019b,Gromov2024,G,GromovZhu2024}.
One leitmotif is that they cannot both become arbitrarily large.
A particular instance of this principle is Gromov's mean-of-the-mean-curvature conjecture:

\begin{conjecture}[Gromov \cite{G}*{p.~232}]
Let $X$ be a compact connected Riemannian manifold with boundary and let $\sigma\in\R$.
We assume that the scalar curvature of $X$ satisfies $\ScalOp_X\ge \sigma$.
Then there exists a constant $C(\partial X,\sigma)$ such that
\[
\int_{\partial X} H \le C(\partial X,\sigma).
\]
\end{conjecture}

Here $H$ denotes the unnormalized mean curvature of the boundary $\partial X$ w.r.t.\ the outward unit normal.
So, for example, the boundary of a Euclidean ball in $\R^n$ of radius $r$ has mean curvature $H=(n-1)/r$.
The point is that the upper bound on the total mean curvature of the boundary should depend only on the intrinsic geometry of the boundary and on a lower bound of the scalar curvature of the filling manifold $X$, but not on any other geometric data of $X$.

The deformation result \cite{BH}*{Theorem~3.7} by the author and Hanke shows that there cannot be a lower bound on the mean curvature of the boundary of this form.

Early results towards Gromov's conjecture used the additional assumption $H\ge 0$ or even $H>0$.
This applies to 
\begin{itemize}
\item
\cite{ST}*{Theorems~1 and~4.1} by Shi and Tam where $\partial X$ is isometric to a convex hypersurface of Euclidean space and its generalization \cite{EMW}*{Theorem~2} by Eichmair, Miao, and Wang, 
\item
to \cite{SWW}*{Theorem~4.1} by Shi, Wang, and Wei where $\partial X$ is diffeomorphic to a sphere,
\item
to the results by Shi, Wang, Wei, and Zhu in \cite{SWWZ}, 
\item
to the ones by Mantoulidis and Miao in \cite{MM}, and
\item 
to \cite{CLSZ}*{Proposition~1.16} by Chen, Liu, Shi, and Zhu and to \cite{Wang}*{Theorem~1.1} by Wang where $\partial X$ is a torus.
\end{itemize}

In \cite{Baer2026} by the author and independently in \cite{FHH} by Frenck, Hanke, and Hirsch, the lower bound $\sigma$ is allowed to be negative and $H$ is also allowed to take negative values.
Gromov's conjecture is proved in these two papers by different methods in large generality, for instance, for all spin manifolds.
However, in these two papers, the upper bound $C$ depends, in addition to the intrinsic geometry of the boundary and the lower bound $\sigma$ of the scalar curvature, also on a lower bound of the mean curvature of the boundary.

It is curious that an \emph{upper} bound on the total mean curvature seems to depend on a \emph{lower} pointwise bound of the integrand.
In the present note, we ask whether the constant can be made independent of the lower mean-curvature bound.

If $\sigma=0$ and $\dim(X)=2$, then this is true and it follows trivially from the Gauss-Bonnet theorem.
Indeed, in the surface case, the scalar curvature is twice the Gauss curvature, and the mean curvature of the boundary is its geodesic curvature.
We denote the Gauss curvature of a Riemannian surface $(X,g)$ by $K_g$ and the geodesic curvature of its boundary $\partial X$ by $\kappa_g$.
Again, we use the convention that the geodesic curvature $\kappa_g$ is computed with respect to the outward unit normal, so that the boundary of a Euclidean disk has positive geodesic curvature.
The Gauss-Bonnet theorem gives us for $K_g\ge 0$ that
\[
\int_{\partial X} \kappa_g = 2\pi\chi(X) - \int_X K_g \le 2\pi\chi(X) \le 2\pi.
\]
Here $\chi(X)$ denotes the Euler characteristic of $X$.
If the lower bound for $K_g$ is negative, then this simple argument no longer works and things become more interesting.
We will prove:

\begin{theorem}\label{thm.2d}
Let $(X,g)$ be a compact connected Riemannian surface with boundary of length $L$.
Assume that
\[
  K_g\geq -c^2
\]
with a constant $c\ge0$.
Then
\begin{equation}\label{eq:topological-bound}
  \int_{\partial X} \kappa_g
  \leq
  \begin{cases}
    \sqrt{4\pi^2+c^2L^2},& \text{if $X$ is homeomorphic to a 2-disk,}\\
    cL,& \text{otherwise.}
  \end{cases}
\end{equation}
\end{theorem}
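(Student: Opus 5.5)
The plan is to combine the Gauss--Bonnet theorem with a comparison estimate for the area of $X$, obtained from the inward equidistant curves of $\partial X$. Write $k:=\int_{\partial X}\kappa_g$ and $A:=\operatorname{area}(X)$. Since $K_g\ge -c^2$, Gauss--Bonnet gives
\[
k = 2\pi\chi(X)-\int_X K_g \le 2\pi\chi(X)+c^2A .
\]
If $X$ is not a disk, then $\chi(X)\le 0$ (it is connected with nonempty boundary), so $k\le c^2A$. For a disk, $k\le 2\pi+c^2A$. The case $c=0$ is the trivial argument already given in the introduction. So assume $c>0$, and also $k>cL$, since otherwise there is nothing to prove in either case (note $cL\le\sqrt{4\pi^2+c^2L^2}$). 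Everything then reduces to an upper bound on $A$ in terms of $k$ and $L$ when $k>cL$.

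For the area bound, let $\ell(t)$ be the length of the set of points at distance $t$ from $\partial X$. By the coarea formula, $A=\int_0^\infty \ell(t)\,dt$. We have $\ell(0)=L$ and $\ell'(0)=-k$ by the first variation of length. Along each inward normal geodesic from $\partial X$, up to its cut time, the normal Jacobi field $J(s,t)$ satisfies
\[
J''=-K_gJ\le c^2J,\qquad J(s,0)=1,\qquad J'(s,0)=-\kappa_g(s).
\]
Points beyond the cut locus are no longer counted, which only decreases $\ell$. I would therefore aim to show that $\ell''\le c^2\ell$ in the barrier/distributional sense, with $\ell$ allowed only downward jumps. Sturm comparison with the solution of $u''=c^2u$, $u(0)=L$, $u'(0)=-k$ would then give
\[
\ell(t)\le L\cosh(ct)-\tfrac{k}{c}\sinh(ct)
\]
as long as the right-hand side is positive, and $\ell\equiv0$ afterwards. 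Because $k>cL$, the right-hand side vanishes at the time $t_0$ with $\tanh(ct_0)=cL/k$. Set $D:=k^2-c^2L^2>0$, so that $\sinh(ct_0)=cL/\sqrt D$ and $\cosh(ct_0)=k/\sqrt D$. Integrating from $0$ to $t_0$, a short computation gives
\[
c^2A\le \frac{c^2L^2-k^2+k\sqrt D}{\sqrt D}=k-\sqrt D .
\]

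Inserting this into the Gauss--Bonnet inequality finishes both cases. If $X$ is not a disk, $k\le c^2A\le k-\sqrt D$ forces $D\le0$, contradicting $k>cL$; hence $k\le cL$. If $X$ is a disk, $k\le 2\pi+k-\sqrt D$ gives $\sqrt D\le 2\pi$, which is exactly $k\le\sqrt{4\pi^2+c^2L^2}$.

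The main obstacle is the comparison step for $\ell$. The function $\ell$ is not smooth: the distance function to $\partial X$ has a cut locus, and the level sets change topology. Moreover, the bound is needed for the integrated quantity $\ell$, not pointwise in $s$. A pointwise bound on $J(s,\cdot)$ alone does not suffice, because $\int_0^{t}J\,dt$ is not controlled where $\kappa_g(s)\le c$. My first attempt would be to write
\[
\ell(t)\le \int_{\partial X} J(s,t)\,\mathbf 1_{\{t<\mathrm{cut}(s)\}}\,ds .
\]
I would then verify the differential inequality $\ell''\le c^2\ell$ in the weak sense on intervals between cut events, using that removing geodesics at the cut locus only produces downward jumps of $\ell$ and does not spoil the comparison with $u$. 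If this direct route causes trouble, I would smooth the distance function, or approximate $\partial X$ by level sets avoiding the cut locus, and pass to the limit.
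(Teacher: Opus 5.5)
Your route differs from the paper's. The paper never looks at parallel curves: it conformally changes $g$ to curvature $-c^2$ (the factor $e^{2u}$ with $u\le 0$ and $u|_{\partial X}=0$ keeps the boundary metric and can only increase $\kappa$), and then quotes the classical isoperimetric inequalities for constant curvature $-c^2$. Your reduction is sound as far as it goes. $\ell'(0)=-k$ is correct. Integrating $L\cosh(ct)-\frac kc\sinh(ct)$ up to $t_0$ does give $c^2A\le k-\sqrt{k^2-c^2L^2}$, with equality for hyperbolic geodesic disks. That bound would close both cases.

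The gap is the comparison $\ell\le u$, and the mechanism you propose for it is false, not merely unverified. The inequality $\ell''\le c^2\ell$ in a weak or barrier sense, with only downward jumps of $\ell$, fails already for flat domains. There $c=0$ and it says $\ell$ is concave apart from downward jumps. Join two round disks of radii $r<R$, far apart in $\R^2$, by a thin corridor and smooth the corners. Once $t$ exceeds the half-width of the corridor, $X_t=\{d(\cdot,\partial X)\ge t\}$ has two nearly round components and $\ell'(t)\approx-4\pi$. At $t\approx r$ the small component shrinks to a point, so $\ell$ stays continuous but $\ell'$ jumps up to $\approx-2\pi$; thus $\ell''$ carries a positive mass $\approx 2\pi\,\delta_r$.

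In your representation $\ell(t)\le\int_{\partial X}J(s,t)\mathbf 1_{\{t<\mathrm{cut}(s)\}}\,ds$, this is the removal of strongly focusing geodesics ($J'=-1/r$ along the small circle). Deleting them raises $\ell'$ without lowering $\ell$. Between cut events $\ell''\le c^2\ell$ does hold. But Sturm comparison cannot be restarted after such an event without a bound on $\ell'(t^+)$, and no local Jacobi-field argument supplies one. Smoothing the distance function does not help either, since the inequality you would pass to the limit is itself false.

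The missing ingredient is global and first order: apply Gauss--Bonnet to the inner parallel sets $X_t$ themselves. At the convex corners of $\partial X_t$ created by the cut locus, length is lost at rate $2\tan(\theta/2)\ge\theta$, where $\theta$ is the exterior angle. This gives the Fiala--Hartman inequality
\[
\ell'(t)\le -2\pi\chi(X_t)+\int_{X_t}K_g .
\]
Every point at distance $<t$ from $\partial X$ is joined to $\partial X$ inside $X\setminus X_t$. Hence every component of $X\setminus X_t$ meets $\partial X$, and therefore $\chi(X_t)\ge\chi(X)$ as long as $X_t\neq\emptyset$. Combine this with
\[
\int_{X_t}K_g=2\pi\chi(X)-k-\int_{X\setminus X_t}K_g\le 2\pi\chi(X)-k+c^2\int_0^t\ell .
\]
The result is $\ell'(t)\le -k+c^2\int_0^t\ell(\tau)\,d\tau$ for all $t$, regardless of topology changes of $X_t$. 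This yields $\ell\le u$ by a Gronwall-type argument.

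For a merely smooth metric you also need the regularity theory of parallel curves, as in Hartman and Shiohama--Tanaka. This means a.e. differentiability of $\ell$, no positive singular part, only downward jumps, and piecewise smooth level sets for a.e. $t$. Your proposal only gestures at this. With these two ingredients your argument becomes a valid alternative proof that uses $K_g\ge -c^2$ directly and avoids the PDE step. The paper's conformal change is what lets it bypass the cut locus entirely and rely on off-the-shelf isoperimetric inequalities.
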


The point of Theorem~\ref{thm.2d} is that no lower bound on the geodesic curvature is needed, even when the lower Gauss-curvature bound is negative.
Thus, Gromov's conjecture is confirmed in two dimensions.

The estimate for disk-type surfaces has been obtain by He, Wu, and Xie in \cite{HWX} under the additional assumption that the boundary geodesic curvature is positive, $\kappa_g>0$. 
Their proof uses positivity of the (2+1)-dimensional hyperbolic Hamiltonian mass. 
Theorem~\ref{thm.2d} removes this pointwise positivity assumption on the geodesic curvature and also treats surfaces of arbitrary topology.

\begin{remark}
The bounds in Theorem~\ref{thm.2d} are optimal.
For $c=0$, the disk estimate is sharp: equality is attained by Euclidean disks of radius $r$.
If $X$ is homeomorphic to an annulus or to a Möbius strip and carries a flat metric, then Gauss-Bonnet gives us $\int_{\partial X}\kappa_g=0$, so that again equality holds in the second case of~\eqref{eq:topological-bound}.

In the hyperbolic case $K=-c^2<0$ the disk estimate is also sharp: equality is attained by geodesic disks.
Indeed, for a geodesic disk of radius $r$ we then have
\[
  L=\frac{2\pi}{c}\sinh(cr)
  \quad\text{and}\quad
  \int_{\partial X}\kappa_g=2\pi\cosh(cr).
\]
Hence equality holds in~\eqref{eq:topological-bound}.
For annuli, the estimate is asymptotically sharp:
consider $X=[-\Lambda,\Lambda]\times S^1$ with the hyperbolic metric $g=dt^2+\cosh^2(c t)\,d\theta^2$.
Then $K_g=-c^2$ and $L=\LengthOp(\partial X)=4\pi\cosh(c\Lambda)$ as well as $\int_{\partial X}\kappa_g=4\pi c\sinh(c\Lambda)$.
Therefore,
\[
\frac{1}{L}\int_{\partial X}\kappa_g
=
c\tanh(c\Lambda)\to c
\qquad\text{ as }\Lambda\to\infty.
\]
Passing to the quotient of $X$ by the involution $(t,\theta)\mapsto(-t,\theta+\pi)$ gives a hyperbolic Möbius strip with the same asymptotic behavior.
\end{remark}

The idea of the proof of Theorem~\ref{thm.2d} is to use the Gauss-Bonnet theorem again, but now an additional area term shows up which needs to be controlled by the length of the boundary.
The isoperimetric inequality does exactly that but it requires an \emph{upper} bound for the Gauss curvature.
To overcome this problem, we first conformally change the metric to one of constant negative curvature without changing the induced metric on the boundary.
Then a combination of the Gauss-Bonnet theorem and two isoperimetric inequalities for hyperbolic surfaces provides the desired estimates.
\medskip

In dimensions $\ge3$, the situation is very different.
It turns out that the lower bound on the mean curvature of the boundary is actually needed in Gromov's conjecture.
This can be seen from the examples in the following theorem.

\begin{theorem}\label{thm.highdim}
For each integer $n\ge3$, there exists a compact connected $n$-dimensional spin manifold $X$ with nonempty smooth boundary and a $1$-parameter family of Riemannian metrics $g_a$ on $X$ such that
\begin{enumerate}[label=(\roman*)]
\item\label{thm.highdim1}
the scalar curvature $\ScalOp_{g_a}\equiv 0$ for every $a>0$,
\item\label{thm.highdim2}
the induced metric on the boundary $\partial X$ is independent of $a$,
\item\label{thm.highdim3}
$\int_{\partial X}H_{g_a}\to\infty$ as $a\to0$, where $H_{g_a}$ is the mean curvature of $\partial X$ w.r.t.\ the outward unit normal,
\item\label{thm.highdim4}
$\min_{\partial X}H_{g_a}\to-\infty$ as $a\to0$.
\end{enumerate}
If $n\ge5$, then \ref{thm.highdim1} can be replaced by 
\begin{enumerate}[label=(\roman*)']
\item\label{thm.highdim1prime}
the scalar curvature $\ScalOp_{g_a}\equiv C$ for every $a>0$ and any prescribed constant $C\in\R$.
\end{enumerate}
\end{theorem}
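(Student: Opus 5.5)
The plan is to build the examples explicitly as multiply warped products over an interval, to reduce $\ScalOp=0$ to an ODE constraint, and then to obtain \ref{thm.highdim1prime} for $n\ge5$ by taking a Riemannian product with a closed factor of constant scalar curvature.

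\emph{Step 1 (the metrics).} For $n\ge3$ take $X=[0,\ell]\times T^{n-1}$, which is spin and has boundary two flat tori. On it consider
\[
g=dt^2+\sum_{i=1}^{n-1} f_i(t)^2\,dx_i^2 ,
\]
with positive functions $f_i$. The standard formula gives
\[
\ScalOp_g=-2\sum_i\frac{f_i''}{f_i}-2\sum_{i<j}\frac{f_i'f_j'}{f_if_j}.
\]
At $t=\ell$ the outward normal is $\partial_t$, and at $t=0$ it is $-\partial_t$. Set $V=\prod_i f_i$. Since the $x_i$ have period $1$, the boundary contributions are $\pm V'$ at the two ends, so
\[
\int_{\partial X}H=V'(\ell)-V'(0),
\]
with pointwise mean curvature $H=\pm(\log V)'$. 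Condition \ref{thm.highdim2} becomes the requirement that the values $f_i(0)$ and $f_i(\ell)$ be fixed. The interval length $\ell=\ell_a$, however, is allowed to vary with $a$.

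\emph{Step 2 (solving $\ScalOp=0$).} The first attempt uses power-type (Kasner-like) warpings $f_i(t)=\lambda_i (t+a)^{p_i}$. With $P=\sum p_i$ and $Q=\sum p_i^2$, we have
\[
-\tfrac{t^2}{2}\ScalOp=\tfrac12(P^2+Q)-P,
\]
so $\ScalOp\equiv0$ along a whole quadric of exponents $(p_i)$, which contains sign-indefinite choices once $n-1\ge2$. The free parameters are the exponents on this quadric, the scalings $\lambda_i$, the shift $a$ and the length $\ell$. They are to be used to match the prescribed boundary values at both ends.

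The mechanism I want is as follows. As $a\to0$ the left end approaches the Kasner singularity, so $|(\log V)'(0)|\sim |P|/a\to\infty$. Choosing $P<0$ makes $V$ decreasing there, so $-(\log V)'(0)\to+\infty$; hence the boundary term at $t=0$ blows up positively and dominates. Meanwhile some individual $f_i$ have positive exponents and others negative ones. Reconciling the fixed values at both ends with the exponents then forces the chosen parametrization to produce large derivatives of the opposite sign somewhere on the other boundary component. This should give $\min H\to-\infty$, i.e.\ \ref{thm.highdim4}, while \ref{thm.highdim3} follows from the dominant positive term.

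If the pure Kasner ansatz cannot match all $2(n-1)$ boundary values, I would instead prescribe $f_2,\dots,f_{n-1}$ freely, with the right boundary values and steep slopes of size $1/a$ near one end. I would then solve the scalar-curvature equation, which is a linear second-order ODE in $f_1$, for $f_1$ as a two-point boundary value problem.

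\emph{Step 3 (higher dimensions, \ref{thm.highdim1prime}).} For $n\ge5$, take the $3$-dimensional example $(X^3,g_a)$ and form $X^3\times Y^{n-3}$ with the metric $g_a+h$. Here $Y$ is a closed spin manifold of dimension $n-3\ge2$ carrying a metric $h$ of constant scalar curvature $C$: a round sphere if $C>0$, a flat torus if $C=0$, and a hyperbolic manifold (e.g.\ a product of a hyperbolic surface with a torus, suitably rescaled) if $C<0$. Then $\ScalOp\equiv C$, the boundary metric is $g_a|_{\partial X^3}+h$, which is independent of $a$, and $H$ is unchanged pointwise. Moreover $\int H$ is multiplied by $\vol(Y,h)$, so \ref{thm.highdim3} and \ref{thm.highdim4} persist, and spin is preserved. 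This is exactly why $n\ge5$ is needed: the factor $Y$ must have dimension at least $2$ in order to realize every constant $C$.

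The main obstacle is Step 2: arranging the boundary values at both ends to stay fixed while $\ScalOp\equiv0$ and the slopes blow up with the right signs. I expect this to require a careful choice of exponents or of the ODE data, rather than a soft argument.
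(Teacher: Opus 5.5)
\textbf{Genuine gap in Step 2: the sign mechanism is impossible, and the ingredient that actually makes (iii) work is missing.}

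Your Step 2 relies on choosing $P<0$, and that is impossible on your own quadric. The left-hand side of $P^2+Q=2P$ is nonnegative, so $P\ge 0$. Using $Q\ge P^2/(n-1)$ you even get $0\le P\le 2(n-1)/n$, with $P=0$ only for the flat metric. So for scalar-flat Kasner-type warpings the volume always increases away from the singularity. The boundary component next to the singularity then has $H=-P/a\to-\infty$ with respect to its outward normal $-\partial_t$.

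This is not an artifact of the ansatz. For any scalar-flat metric of your form, the slice mean curvature $\sigma=(\log V)'$ satisfies
$$\sigma'=-\tfrac12\sigma^2-\tfrac12\sum_i (f_i'/f_i)^2\le 0.$$
Hence $H(0)+H(\ell)=\sigma(\ell)-\sigma(0)\le 0$: a large positive value at one end forces a large negative value at the other. Neither end "dominates" by itself; (iii) is decided by the total $\int_{\partial X}H=V'(\ell)-V'(0)=\int_0^\ell V''$, so you need $V$ convex. In the Kasner family, $V''/V=P(P-1)/(t+a)^2$, so the condition is $P>1$. This convexity condition is absent from your plan.

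Your description of the far end is also off: there $H=P/(\ell+a)>0$, not large negative. The near-singular end is what gives (iv), and (iii) comes from the far end outweighing it. The fallback two-point ODE for $f_1$ is not carried out and does not address this point either.

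\textbf{How to close it.} Once this is seen, your worry about matching the $2(n-1)$ boundary values also disappears. Keep the exponents fixed and let the length scale with $a$, say $\ell_a=ka$. Then $f_i(\ell_a)/f_i(0)=(k+1)^{p_i}$ is independent of $a$, and the choice $\lambda_i=f_i(0)\,a^{-p_i}$ fixes the values at $t=0$. One gets
\[
\int_{\partial X}H=\frac{P\,V(0)}{a}\big((k+1)^{P-1}-1\big),\qquad \min_{\partial X}H=-\frac{P}{a},
\]
so (iii) and (iv) both hold as soon as $P>1$. The symmetric point $p_i=2/n$ of the quadric gives $P=2(n-1)/n$, which exceeds $1$ exactly when $n\ge 3$. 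With $k=1$ this is precisely the paper's example: the single warped product $dt^2+(t/a)^{4/n}g_Y$ on $[a,2a]\times Y$.

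Your Step 3 matches the paper's treatment of (i)$'$ for $n\ge5$ and is fine.
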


The examples satisfying \ref{thm.highdim1}--\ref{thm.highdim4} are topologically of the form $X=[0,1]\times Y$ and carry a warped product metric.
The manifolds satisfying \ref{thm.highdim1prime} instead are of the form $X=([0,1]\times Y^3)\times W^{n-3}$ where the metric on $[0,1]\times Y^3$ is a warped product metric and the product with $W^{n-3}$ is a Riemannian product.
In both cases, the boundary $\partial X$ is disconnected.

We can also obtain examples with connected boundary, at the expense that $\ScalOp_{g_a}\equiv 0$ gets replaced by $|\ScalOp_{g_a}|$ being bounded.

\begin{theoremBprime}\label{thm.connected}
For each integer $n\ge3$, there exists a compact connected $n$-dimensional spin manifold $\hat{X}$ with connected nonempty smooth boundary and a $1$-parameter family of Riemannian metrics $\hat{g}_a$ on $\hat{X}$ such that
\begin{enumerate}[label=(\roman*)]
\item 
the scalar curvature is bounded, $|\ScalOp_{\hat{g}_a}|\leq C$ where the constant $C$ is independent of $a>0$,
\item
the induced metric on the boundary $\partial \hat{X}$ is independent of $a$,
\item
$\int_{\partial \hat{X}}H_{\hat{g}_a}\to\infty$ as $a\to0$, where $H_{\hat{g}_a}$ is the mean curvature of $\partial \hat{X}$ w.r.t.\ the outward unit normal,
\item
$\min_{\partial \hat{X}}H_{\hat{g}_a}\to-\infty$ as $a\to0$.
\end{enumerate}
If $n\ge5$, then the data can be chosen such that $\ScalOp_{\hat{g}_a}$ has a uniform positive lower bound.
\end{theoremBprime}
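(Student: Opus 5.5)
We derive Theorem~\ref{thm.connected} from Theorem~\ref{thm.highdim} by capping off all boundary components but one. Let $X=[0,1]\times Y$ with the metrics $g_a$ from Theorem~\ref{thm.highdim}; its boundary is $\{0\}\times Y\sqcup\{1\}\times Y$. Inspecting the construction, the divergence $\int_{\partial X}H_{g_a}\to\infty$ and $\min H_{g_a}\to-\infty$ should be carried by the two ends in a controlled way. Assume the bad behavior with $\min H\to-\infty$ sits at one end, say $t=0$, while $\int H\to\infty$ also comes from (or can be made to come from) that end. If instead the two phenomena sit at different ends, we keep the end carrying $\int H\to+\infty$ and take care of $\min H$ separately. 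In any case the metric near $t=1$ (say on $[1-\delta,1]\times Y$) is needed only as a collar, and we intend to keep the end $\{0\}\times Y$ as $\partial\hat X$.

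The main step is to close off the other end by gluing a compact spin filling $Z$ of $Y$ (with $\partial Z=Y$) to $X$ along $\{1\}\times Y$, obtaining $\hat X=X\cup_Y Z$ with connected boundary $\{0\}\times Y$. For this we need $Y$ to be spin null-cobordant; in the construction $Y$ is presumably a sphere or torus (or $S^3/\Gamma\times W$), so we can arrange that $Y=\partial Z$ with $Z$ spin, e.g.\ a ball, a solid torus $D^2\times T^{k}$, or, for $n\ge5$, $(\text{filling of }Y^3)\times W$. The metric $\hat g_a$ is built as follows. First, near $t=1$, we modify $g_a$ so that in a fixed collar it agrees with a metric independent of $a$. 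Second, on $Z$ we put a fixed metric matching this collar. The difficulty is that the warping function near $t=1$ depends on $a$. We handle this by choosing the parameters of the warped product construction so that on $[1/2,1]$ the warping function converges smoothly (with all derivatives) as $a\to0$; equivalently, the $a$-dependence is localized near $t=0$. Then an interpolation $f_a=\chi f_a+(1-\chi)f_0$ with a fixed cutoff $\chi$ gives scalar curvature bounded uniformly in $a$, since scalar curvature of a warped product depends only on $f,f',f''$ and the fixed fiber geometry. On $Z$ everything is $a$-independent, so $|\ScalOp_{\hat g_a}|\le C$. The induced boundary metric on $\{0\}\times Y$ is unchanged, and so is $H$ there. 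It follows that $\int_{\partial\hat X}H\to\infty$ and $\min H\to-\infty$, provided both occur at $t=0$.

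The main obstacle I expect is ensuring that both divergences happen at the same end, while the metric stays uniformly controlled at the other end. If the construction has them at opposite ends, I would first try to reparametrize the warping function, e.g.\ $f_a(t)=\phi(t/a)$ type profiles concentrated at $t=0$, so that $f_a'(0)$ takes large values of both signs over $Y$. If $Y$ is a product and the mean curvature is constant on each end, it is more natural to place a negative-$H$ neck at $t=0$ and add the large positive total mean curvature via a second component glued by a boundary connected sum. I would check which option the construction permits.

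For $n\ge5$ we take $Z$ with uniformly positive scalar curvature and use the product with $W^{n-3}$ of large positive scalar curvature ($W$ a small round sphere), as in Theorem~\ref{thm.highdim}\ref{thm.highdim1prime} with $C>0$ large. The bounded error from interpolation on $[1/2,1]\times Y$ is then dominated, which gives a uniform positive lower bound. The spin condition is preserved because the gluing is along a spin boundary with compatible spin structures, which we choose using the spin null-cobordism of $Y$.
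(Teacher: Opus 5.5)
\textbf{There is a genuine gap.} The proviso in your main step (``provided both occur at $t=0$'') fails, and it cannot be repaired within a capping-off strategy.

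In the example of Theorem~\ref{thm.highdim}, the end $Y_a$ has constant $H\equiv-\frac{2(n-1)}{na}$. Its total mean curvature is therefore $-\frac{2(n-1)}{na}\to-\infty$. The end $Y_{2a}$ has $H\equiv\frac{n-1}{na}>0$. Item (iii) holds only because the contributions of the two ends are added together.

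\emph{No single end can carry both divergences.} On every slice of a warped product $dt^2+\phi(t)^2g_Y$ the mean curvature is constant. So on a boundary component with fixed volume, $\min H\to-\infty$ forces $\int H\to-\infty$. Your idea of ``$f_a'(0)$ taking large values of both signs over $Y$'' is not available in this class of metrics.

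\emph{Keeping only $Y_{2a}$ is impossible in principle.} You would obtain a spin manifold with fixed boundary geometry, $\ScalOp\ge -C$, $H\ge 0$ and $\int H\to\infty$. This contradicts the spin results of \cite{Baer2026} and \cite{FHH} quoted in the introduction, so the cap you want does not exist.

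\emph{The fixed collar you assume does not exist either.} Pulled back to $[0,1]\times Y$, the metric is $a^2ds^2+(1+s)^{4/n}g_Y$. It collapses everywhere as $a\to0$. Any interpolation to an $a$-independent metric over a fixed $s$-range produces terms of size $a^{-2}$ in the scalar curvature.

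\textbf{What is needed} is the option you mention only in passing: keep both ends in the boundary and join them by a boundary connected sum. This is what the paper does, for $n=3$ and $Y=T^2$.
\begin{enumerate}[label=(\alph*)]
\item Near the segment $[a,2a]\times\{o\}$, the paper replaces $g_a$ by $\tg_a=dt^2+\big(1-\psi(r)+\psi(r)\tfrac{t}{a}\big)^{4/3}g_Y$. For $r\le\eta$ this equals the flat, $a$-independent metric $dt^2+g_Y$.
\item It then attaches a fixed flat $1$-handle $Z$ joining $\{a\}\times B^2(\eta)$ to $\{2a\}\times B^2(\eta)$. The result has connected boundary $\partial\hat X=Y\# Y$ with an $a$-independent metric.
\end{enumerate}

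Two nontrivial points would still have to be supplied in your write-up.
\begin{enumerate}[label=(\alph*)]
\item \emph{Uniform scalar curvature bound on the transition region $r\in[\eta,2\eta]$.} View this region as a warped product with $S^1$ fibres and warping function $f=q^{2/3}r$. The exact identity $(\partial_tf)^2=-2f\,\partial_t^2f$ cancels all the $t$-derivative terms, which are the ones of order $a^{-2}$. Only $r$-derivative terms remain, and these are bounded independently of $a$.
\item \emph{Mean-curvature bookkeeping.} On slices one has $0\le H\le\frac{4}{3t}$, so the modified annuli cost at most $\frac{4}{3a}\pi(2\eta)^2$. The handle contributes an $a$-independent constant $h_0$. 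Choosing $\eta$ small enough that $(2^{1/3}-1)(1-4\pi\eta^2)-4\pi\eta^2>0$ keeps $\int H\to\infty$. Meanwhile $H=-\frac{4}{3a}$ survives on $Y_a\setminus B^2(2\eta)$, giving (iv).
\end{enumerate}

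Your treatment of higher dimensions is fine once the $3$-dimensional example exists. Take the product with a closed spin $W^{n-3}$; for $n\ge5$, choose $W$ with constant scalar curvature exceeding the uniform bound to get a positive lower bound. Spin is automatic, since $\hat X$ is an orientable $3$-manifold.
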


\medskip

\textbf{Acknowledgments.}
The author used AI-based tools (GPT~5.6 Sol and GitHub Copilot) in the preparation of this manuscript for editorial and technical assistance, including language refinement, grammar and style checking, typesetting support, literature research, and related non-authorial tasks. 
All substantive claims, arguments, results, interpretations, and conclusions are the author's own. 
The author retains full responsibility for the final manuscript.

\section{Proof of Theorem~\ref{thm.2d}}

We now prove Theorem~\ref{thm.2d}.
Let $(X,g)$ be a compact connected Riemannian surface with boundary of length $L$ and $K_g\geq -c^2$.
The proof proceeds in two steps.

\subsection*{Step 1: Conformal reduction to constant curvature}

We first replace $g$ by a conformal metric of constant curvature $-c^2$ without changing the induced metric on the boundary.
Let $\Delta_g=d^*d$ be the (positive) Laplace-Beltrami operator of $(X,g)$.

\begin{lemma}
For $c\ge 0$, the Dirichlet problem
\begin{equation}\label{eq:dirichlet}
  \begin{cases}
    \Delta_g u = -K_g - c^2e^{2u}&\text{in }X,\\
    u=0&\text{on }\partial X,
  \end{cases}
\end{equation}
has a unique smooth solution $u$.
This solution is nonpositive, $u\leq0$.
\end{lemma}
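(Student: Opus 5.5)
We solve the Dirichlet problem by the method of sub- and supersolutions, and obtain uniqueness and the sign of $u$ from the maximum principle. The sign convention matters throughout. With $\Delta_g=d^*d$ positive, the metric $\tilde g=e^{2u}g$ has curvature $K_{\tilde g}=e^{-2u}(K_g+\Delta_g u)$. Hence \eqref{eq:dirichlet} is exactly the statement $K_{\tilde g}=-c^2$. We write the equation as $F(u):=\Delta_g u+c^2e^{2u}+K_g=0$. The nonlinearity $u\mapsto c^2e^{2u}$ is nondecreasing in $u$, which is the favourable monotonicity for the comparison principle.

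\textbf{Supersolution.} The constant $\bar u=0$ satisfies $F(0)=c^2+K_g\ge 0$ by the curvature assumption $K_g\ge -c^2$. It also has the correct boundary values, so $\bar u=0$ is a supersolution.

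\textbf{Subsolution.} Let $w$ be the smooth solution of the linear problem $\Delta_g w=-K_g-c^2$ in $X$ with $w=0$ on $\partial X$; it exists since the Dirichlet Laplacian is invertible. Then
\[
F(w)=c^2e^{2w}-c^2=c^2(e^{2w}-1).
\]
This is $\le 0$ if $w\le 0$, which need not hold. So instead we take $\underline u=w-M\varphi$, where $\varphi$ solves $\Delta_g\varphi=1$ with $\varphi=0$ on $\partial X$, so $\varphi\ge0$, and $M$ is a large constant. Then
\[
F(\underline u)=-M-c^2+c^2e^{2\underline u}\le -M-c^2+c^2e^{2\max w}\le 0
\]
once $M\ge c^2e^{2\max w}$. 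The boundary values are $0$. If the case $c=0$ needs separate mention: there the equation is linear and one simply takes $u=w$.

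\textbf{Existence.} It remains to check $\underline u\le\bar u$. This follows from the maximum principle, since $\Delta_g(\underline u-0)=-K_g-c^2-M\le 0$ in the positive-Laplacian convention and $\underline u=0$ on $\partial X$. With an ordered pair of sub- and supersolutions, the standard monotone iteration gives a smooth solution with $\underline u\le u\le 0$. One iterates $(\Delta_g+\lambda)u_{k+1}=\lambda u_k-c^2e^{2u_k}-K_g$ with $\lambda\ge 2c^2$ chosen so that $s\mapsto\lambda s-c^2e^{2s}$ is increasing on $[\min\underline u,0]$. Schauder estimates and bootstrapping then upgrade the limit to a smooth solution. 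This already gives $u\le 0$ for the solution constructed.

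\textbf{Uniqueness and the sign of every solution.} Suppose $u_1,u_2$ are two solutions, and set $v=u_1-u_2$. Then
\[
\Delta_g v=-c^2\bigl(e^{2u_1}-e^{2u_2}\bigr)=-c^2 h\,v
\]
with $h\ge0$. So $v$ solves $\Delta_g v+c^2hv=0$, $v|_{\partial X}=0$, and the maximum principle forces $v=0$: at an interior positive maximum we would have $\Delta_g v\ge0$, while $c^2hv>0$ unless $c=0$, in which case harmonicity gives the same conclusion. Similarly, any solution satisfies $\Delta_g u=-(K_g+c^2)-c^2(e^{2u}-1)$. At an interior positive maximum of $u$ the right side is $<0$ while $\Delta_g u\ge 0$, which is a contradiction, so every solution has $u\le0$.

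The main obstacle is mostly bookkeeping: keeping the sign convention of the positive Laplacian consistent and making sure the exponential nonlinearity has the monotonicity that permits the maximum principle. The construction of the subsolution is the only non-automatic step.
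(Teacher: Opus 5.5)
Your proof is correct and takes essentially the paper's route: $0$ as supersolution, the solution of the linear Dirichlet problem $\Delta_g w=-K_g-c^2$ as subsolution, and the maximum principle for uniqueness and for the sign. The one misstep is your claim that $w\le 0$ ``need not hold'': since $\Delta_g w=-(K_g+c^2)\le 0$ and $w|_{\partial X}=0$, the same maximum-principle argument you later apply to $\underline u$ (with $M=0$) gives $w\le 0$, so $w$ is already a subsolution as in the paper and the correction $-M\varphi$ is harmless but unnecessary.
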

\begin{proof}
To show uniqueness, let $u_0$ and $u_1$ be two solutions of~\eqref{eq:dirichlet} and set $v:=u_1-u_0$.
Then
\begin{align*}
  \Delta_g v
  &=
  - c^2(e^{2u_1}-e^{2u_0})\\
  &=
  - c^2\, e^{2u_0}(e^{2v}-1)\\
  &=
  - c^2\, e^{2u_0}(e^v+1)(e^v-1)\\
  &\leq
  - c^2\, e^{2u_0}(e^v+1)v.
\end{align*}
Since $c^2\, e^{2u_0}(e^v+1)\ge0$, the maximum principle implies that $v$ attains its maximum on the boundary, where it vanishes.
Hence $v\le0$.
Interchanging the roles of $u_0$ and $u_1$ gives $v\ge0$, so that $v\equiv0$.
This proves uniqueness.

As to existence, we first construct a subsolution and a supersolution of~\eqref{eq:dirichlet}.
Let $u_+\equiv0$.
Then
\[
  \Delta_g u_+ + K_g + c^2 e^{2u_+}
  = K_g + c^2 \geq 0,
\]
so that $u_+=0$ is a supersolution.
To construct a subsolution, let $u_-$ be the solution of the inhomogeneous linear Dirichlet problem
\[
  \begin{cases}
    \Delta_g u_- = -K_g - c^2&\text{in }X,\\
    u_-=0&\text{on }\partial X.
  \end{cases}
\]
Since $K_g+c^2\ge0$ we have that $\Delta_g u_-\le0$.
Since $u_-=0$ on the boundary, the maximum principle implies that $u_-\le0$ in $X$.
Then
\[
  \Delta_g u_- + K_g + c^2 e^{2u_-}
  = 
  c^2(e^{2u_-}-1) \leq 0,
\]
so that $u_-$ is a subsolution of \eqref{eq:dirichlet}.
The existence of a solution $u$ of~\eqref{eq:dirichlet} with $u_-\le u\le u_+$ now follows from the method of sub- and supersolutions, see e.g.\ \cite[Theorem~A]{Amann1971}.
A bootstrap and elliptic-regularity argument shows that $u$ is smooth.
\end{proof}

Set
\[
  h:=e^{2u}g.
\]
The conformal transformation laws are
\begin{equation}\label{eq:conformal-laws}
  K_h=e^{-2u}(K_g + \Delta_gu),
  \qquad
  e^u \kappa_h=\kappa_g+\partial_{\nu_g}u,
\end{equation}
where $\nu_g$ denotes the outward unit normal.  
Hence
\[
  K_h\equiv -c^2.
\]
Since $u=0$ on $\partial X$, the metrics $g$ and $h$ induce the same boundary
metric.  
Consequently,
\[
  \LengthOp_h(\partial X)=\LengthOp_g(\partial X)=L,
  \qquad\text{and}\qquad
  \kappa_h=\kappa_g+\partial_{\nu_g}u.
\]
Since $u\leq0$ in $X$ and
$u=0$ on $\partial X$, its outward normal derivative satisfies $\partial_{\nu_g}u\geq0$.
Thus
\[
  \kappa_h=\kappa_g+\partial_{\nu_g}u\geq \kappa_g
  \qquad\text{on }\partial X.
\]
Therefore
\begin{equation}\label{eq:comparison}
  \int_{\partial X}\kappa_g
  \leq
  \int_{\partial X}\kappa_h.
\end{equation}
Thus, we will from now on assume that the metric has constant curvature $K_h\equiv-c^2$.

\subsection*{Step 2: Combining Gauss-Bonnet and isoperimetric inequalities}

We write $\chi=\chi(X)$ for the Euler characteristic of $X$.

\begin{lemma}\label{lem.DiskAnnulus}
For the metric $h$ with constant curvature $K_h\equiv-c^2$, we have
\begin{equation}\label{eq:disk-bound}
  \Big|\int_{\partial X}\kappa_h\Big|
  \leq
  \sqrt{4\pi^2\chi^2+c^2L^2} .
\end{equation}
\end{lemma}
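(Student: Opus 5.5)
Gauss--Bonnet for $h$ gives
\[
\int_{\partial X}\kappa_h = 2\pi\chi + c^2 A,
\]
where $A=\AreaOp_h(X)$. To bound this in absolute value by $\sqrt{4\pi^2\chi^2+c^2L^2}$, the natural target is the isoperimetric-type inequality
\begin{equation*}
L^2 \ge 4\pi\chi A + c^2A^2 \quad\text{when } \chi=1, \qquad L^2\ge c^2A^2 \quad\text{when } \chi\le 0.
\end{equation*}
We then square: $(2\pi\chi+c^2A)^2 = 4\pi^2\chi^2 + c^2(4\pi\chi A + c^2A^2)$. The claimed bound is therefore equivalent to $c^2(4\pi\chi A+c^2A^2)\le c^2L^2$. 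The case $c=0$ is trivial, since then $\int\kappa_h=2\pi\chi$. So the whole lemma reduces to the inequality
\[
L^2\ge 4\pi\chi A + c^2A^2
\]
for compact surfaces of constant curvature $-c^2$.

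For $\chi\le 0$ the term $4\pi\chi A\le 0$, so it suffices to prove $L\ge cA$. For this we use $\int_{\partial X}\kappa_h = 2\pi\chi + c^2A$, but that is circular. Instead we use the hyperbolic isoperimetric inequality $L \ge cA$, valid on any complete simply connected or not hyperbolic surface with boundary. This follows, e.g., by a calibration argument. Take a function $f$ with $|\nabla f|\le 1$ and $\Delta f\ge c$ locally, such as a Busemann-type function lifted to the universal cover, which is a subset of $\mathbb H^2_{-c^2}$ with $\Delta b = c$. Then the divergence theorem gives $cA\le\int_{\partial X}\partial_\nu f\le L$.

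Here the main obstacle is globalizing: $X$ is not simply connected, so $b$ is not defined on $X$. I would try to develop $X$ into its universal cover and pick a Busemann function invariant under a suitable deck group, or use a coarea/level-set argument. Alternatively, we can invoke the known sharp isoperimetric inequality $L^2\ge 4\pi A - 4\pi\chi$... In any case I would cite the classical hyperbolic isoperimetric inequality $L^2\ge 4\pi\chi A + c^2A^2$ (Bol--Fiala type, valid on surfaces with $K\le -c^2$ for $\chi=1$, with its variant $L\ge cA$ for general topology), which the introduction alludes to as ``two isoperimetric inequalities for hyperbolic surfaces.''

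For the disk case $\chi=1$, the Bol--Fiala inequality $L^2\ge 4\pi A - K A^2 = 4\pi A+c^2A^2$ on a simply connected surface with $K\le -c^2$ gives exactly what is needed. Plugging it into the squared identity yields $(\int\kappa_h)^2\le 4\pi^2+c^2L^2$. For $\chi\le0$, the inequality $L\ge cA$ together with $4\pi\chi A\le0$ gives $(2\pi\chi+c^2A)^2\le 4\pi^2\chi^2+c^2L^2$. Taking square roots finishes \eqref{eq:disk-bound}.
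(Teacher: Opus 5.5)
Your reduction is the same as the paper's. Gauss--Bonnet gives $\int_{\partial X}\kappa_h=2\pi\chi+c^2A$, and squaring shows the lemma is equivalent to $L^2\ge 4\pi\chi A+c^2A^2$. The paper takes this inequality from Burago--Zalgaller (Theorem~2.2.1) for \emph{every} value of $\chi$.

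\textbf{The gap: the bound $L\ge cA$ is false for $\chi\le -1$.} For $\chi\le 0$ you drop the term $4\pi\chi A$ and assert $L\ge cA$ ``for general topology''. Two counterexamples:
\begin{itemize}
\item A pair of pants of curvature $-c^2$ with geodesic boundary has $\kappa_h\equiv0$. Gauss--Bonnet then forces $c^2A=-2\pi\chi=2\pi$, while the boundary lengths, and hence $L$, can be taken arbitrarily small.
\item A closed surface of genus $g\ge2$ with a tiny geodesic disk removed has $cA\approx 4\pi(g-1)/c$ while $L\to0$.
\end{itemize}
Your calibration idea fails for exactly this reason. For $\chi\le-1$ the holonomy group can be non-elementary, as in the pants example, so it fixes no point at infinity and no Busemann function descends to $X$. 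In fact no field $V$ on $X$ with $|V|\le1$ and $\operatorname{div}V\ge c$ can exist, since the divergence theorem would give $cA\le L$. So your final step (``$L\ge cA$ together with $4\pi\chi A\le0$'') is invalid for $\chi\le-1$.

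\textbf{How to repair it.} When $L$ is small, $A$ can be of size about $2\pi|\chi|/c^2$, so you must keep the negative term $4\pi\chi A$. There are two options:
\begin{itemize}
\item Cite the general-$\chi$ form of the isoperimetric inequality, as the paper does.
\item Use the area bound $c^2A\le 2\pi|\chi|+cL$ (Burago--Zalgaller, Theorem~2.3.1, which the paper uses later). If $c^2A+4\pi\chi\le0$ there is nothing to prove. Otherwise $0<c^2A+4\pi\chi\le cL-2\pi|\chi|$ and $0<A\le (cL+2\pi|\chi|)/c^2$. Multiplying gives $A(c^2A+4\pi\chi)\le L^2-4\pi^2\chi^2/c^2\le L^2$.
\end{itemize}

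\textbf{The case $\chi=0$.} Here your development argument can be completed, because the holonomy is cyclic. Trivial, hyperbolic, parabolic, reflection or glide-reflection holonomy fixes a point at infinity, and the corresponding Busemann gradient descends to $X$. Elliptic holonomy needs a different field, since no Busemann function is rotation invariant. The rotation-invariant field $\tanh(cr/2)\,\partial_r$ works: it is smooth, has norm $<1$, and has divergence exactly $c$.
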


\begin{proof}
Putting $A:=\AreaOp_h(X)$, the Gauss-Bonnet theorem gives
\begin{equation}\label{eq:GB-disk}
  \int_{\partial X}\kappa_h=2\pi\chi+c^2A .
\end{equation}
The hyperbolic isoperimetric inequality (see \cite{BuragoZalgaller1988}*{Theorem~2.2.1}) yields
\begin{equation}\label{eq:bol-weil}
  L^2 \geq 4\pi \chi A+c^2A^2.
\end{equation}
Combining~\eqref{eq:GB-disk} and~\eqref{eq:bol-weil}, we find
\begin{align*}
\bigg(\int_{\partial X}\kappa_h\bigg)^2
&=
(2\pi\chi+c^2A)^2\\
&=
4\pi^2\chi^2+c^2(4\pi\chi A+c^2A^2)\\
&\leq
4\pi^2\chi^2+c^2L^2.
\end{align*}
This proves the lemma.
\end{proof}

In particular, Theorem~\ref{thm.2d} is proved for the case that $X$ is homeomorphic to a disk, where $\chi=1$, and for the case that $\chi=0$.

Since Theorem~\ref{thm.2d} follows directly from the Gauss-Bonnet theorem if $c=0$ as explained in the introduction, we assume from now on that $c>0$.
Let $\chi\le -1$.
In this case, another isoperimetric inequality (see \cite{BuragoZalgaller1988}*{Theorem~2.3.1}) applies and yields
\[
A \le -\frac{2\pi\chi}{c^2} + \frac{L}{c}.
\]
In combination with~\eqref{eq:GB-disk}, this concludes the proof of Theorem~\ref{thm.2d}.

\section{Proof of Theorem~\ref{thm.highdim}}

We now prove Theorem~\ref{thm.highdim} and assume $n\ge3$.
We choose a closed connected $(n-1)$-dimensional Riemannian spin manifold $(Y,g_Y)$ with vanishing scalar curvature, e.g.\ a flat torus.
We normalize its volume to $\vol(Y,g_Y)=1$.

For $0<a<b$ and a smooth positive function $\phi\colon[a,b]\to\R$, we consider the manifold $X=[a,b]\times Y$ with the warped product metric $g_\phi = dt^2 + \phi(t)^2 g_Y$.
Since the scalar curvature of $Y$ vanishes, the scalar curvature of $g_\phi$ is given by
\[
\ScalOp_{g_\phi} = -2(n-1)\frac{\phi''}{\phi} - (n-1)(n-2)\frac{(\phi')^2}{\phi^2}.
\]
This is a special case of \cite{DD}*{Theorem~2.1}.
For $a>0$ we define
\[
\phi_a\colon (0,\infty)\to(0,\infty), \quad \phi_a(t)=\bigg(\frac{t}{a}\bigg)^{\frac{2}{n}} .
\]
These functions satisfy
\begin{align}
2\frac{\phi_a''}{\phi_a} + (n-2)\frac{(\phi_a')^2}{\phi_a^2} = 0
\quad \text{and}\quad
\phi_a(a)=1.
\label{eq.diffeq}
\end{align}
We put $b:=2a$ and observe that $\phi_a(2a)=2^{2/n}$.
Therefore, for each $a>0$, the metric $g_a:=g_{\phi_a}$ on $X=[a,2a]\times Y$ has scalar curvature $\ScalOp_{g_a}\equiv 0$ and the boundary $\partial X = (Y,g_Y) \sqcup (Y,2^{4/n} g_Y)$ is independent of $a$ as a Riemannian manifold, see Figure~\ref{fig.X}.
\begin{figure}[ht]
\centering
\begin{overpic}[width=.5\textwidth]{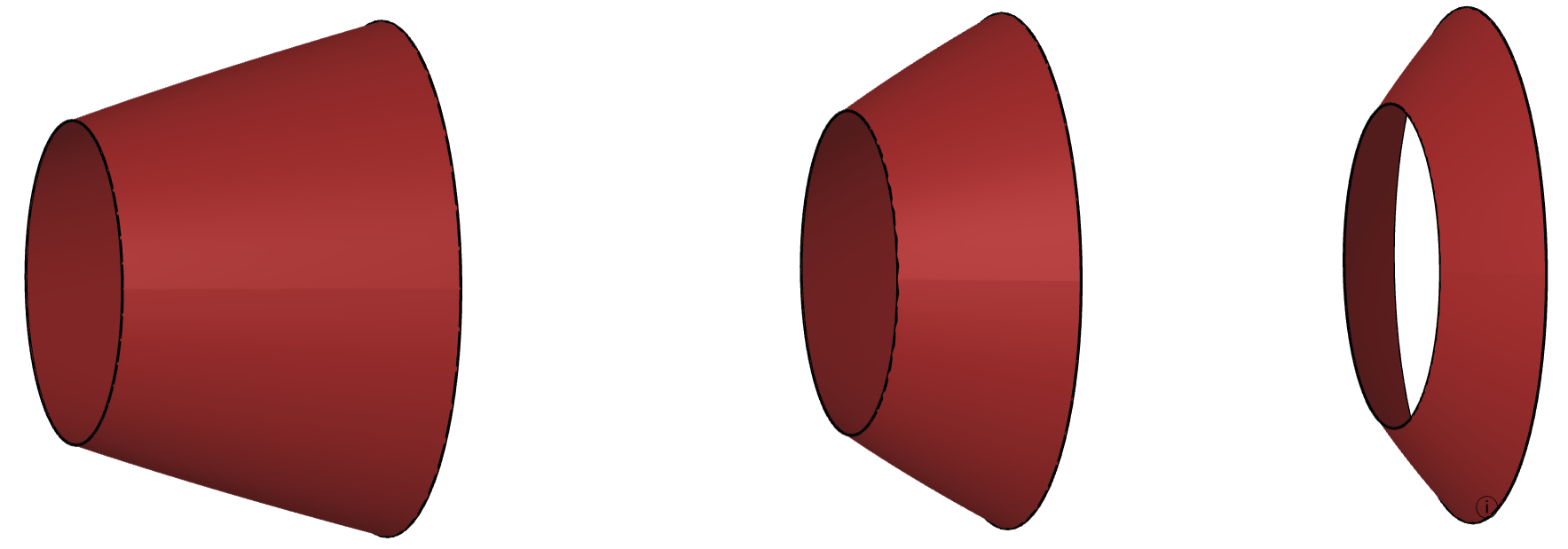}
\put(11,36){$a=1$}
\put(57,36){$a=\frac12$}
\put(86,36){$a=\frac14$}
\put(3,1){$Y_a$}
\put(23,-4){$Y_{2a}$}
\put(15,18){\textcolor{white}{$X$}}
\end{overpic}
\caption{The Riemannian manifold $(X,g_a)$.}
\label{fig.X}
\end{figure}

The volume of the slice $Y_t := \{t\}\times Y\subset X$ is given by $\phi_a^{n-1}(t)=(\frac{t}{a})^{2(n-1)/n}$.
Therefore, the variation formula for the volume gives us for the total mean curvature of the slice w.r.t.\ the normal $\frac{\partial}{\partial t}$:
\begin{equation}
\int_{Y_t} H_a(t) 
= 
\frac{d}{dt} \phi_a^{n-1}(t)
=
\frac{2(n-1)}{na}\,\bigg(\frac{t}{a}\bigg)^{\frac{n-2}{n}} .
\label{eq.TotalMean}
\end{equation}
We now compute the total mean curvature of the boundary $\partial X=Y_a \sqcup Y_{2a}$.
Since $\frac{\partial}{\partial t}$ is the outward normal along $Y_{2a}$ and the inward normal along $Y_a$, we have
\begin{align*}
\int_{\partial X} H_{g_a}
&=
\int_{Y_{2a}} H_a(2a) - \int_{Y_a} H_a(a) 
=
\frac{2(n-1)}{na}\,(2^{\frac{n-2}{n}}-1) .
\end{align*}
Since $n>2$, this shows
\[
\int_{\partial X} H_{g_a} \xrightarrow{a\searrow 0} \infty.
\]
Moreover, the mean curvature of $\partial X$ is constant on each boundary component and
\begin{align}
\min_{\partial X} H_{g_a} 
&= 
-H_a(a) 
=
- \frac{\int_{Y_a} H_a(a)}{\vol(Y_a)} 
=
-\frac{2(n-1)}{na}
\xrightarrow{a\searrow 0} -\infty.
\label{eq.Hnegative}
\end{align}
Formally, the metric $g_a$ lives on $[a,2a]\times Y$.
Pulling it back to $[0,1]\times Y$ via substitution $s=\frac{t}{a}-1$ gives a smooth family of metrics on $X=[0,1]\times Y$ with the desired properties.

If $n\ge5$, then we can start with the three-dimensional example $X^3$ above and take the Riemannian product with a fixed closed connected Riemannian spin manifold $W$ of dimension $n-3$ and constant scalar curvature $C$.
Since scalar curvatures add up under Riemannian products, this gives \ref{thm.highdim1prime} in Theorem~\ref{thm.highdim}.
Since the boundary $\partial (X^3\times W) = \partial X^3 \times W$ is a Riemannian product, we still have \ref{thm.highdim2}.
Moreover, the variational formula used to compute the total mean curvature of the boundary shows that the total mean curvature of $\partial X^3$ gets multiplied by the volume of $W$ to give the total mean curvature of the boundary $\partial (X^3\times W)$.
Therefore, \ref{thm.highdim3} still holds and \ref{thm.highdim4} can be seen similarly.
This concludes the proof of Theorem~\ref{thm.highdim}.

\begin{remark}
In the above examples, the mean of the mean curvature of the boundary is given by
\[
\fint_{\partial X} H_a
:= 
\frac{1}{\vol(\partial X)}\int_{\partial X} H_a
=
\frac{2^{\frac{n-2}{n}}-1}{2^{\frac{2(n-1)}{n}}+1}\,\frac{2(n-1)}{na}
\]
while the negative of the minimum of the mean curvature is given by
\[
-\min_{\partial X} H_a
=
\frac{2(n-1)}{na}.
\]
Thus $\fint_{\partial X} H_a$ is a multiple of $-\min_{\partial X} H_a$ with a constant depending only on the dimension $n$ which is smaller than $1$.
This is in accordance with the general estimate 
\[
\fint_{\partial X} H
\le
C(\partial X) + \mu
\]
in Theorem~1 of \cite{Baer2026} for $X$ with $\ScalOp_X\ge 0$ and $\min_{\partial X} H\ge -\mu$.
\end{remark}

\section{Proof of Theorem~\ref{thm.connected}}

To show Theorem~\ref{thm.connected}, it suffices to treat the three-dimensional case $n=3$ because of the same argument we just employed to prove \ref{thm.highdim1prime} in Theorem~\ref{thm.highdim}.
Therefore, we assume $n=3$ in the following.

\subsection*{Step 1: The construction}

We start with the family of metrics $g_a=dt^2 + (\frac{t}{a})^{4/3} g_Y$ on $X=[a,2a]\times Y$ constructed in the proof of Theorem~\ref{thm.highdim}.
For definiteness, we choose $Y$ to be the flat torus $Y=\R^{2}/\Z^{2}$ with the metric $g_Y$ induced from the standard metric on $\R^{2}$.
Then $Y$ has unit volume.
Recall that the boundary is given by $\partial X = Y_a \sqcup Y_{2a} = (Y,g_Y) \sqcup (Y,2^{4/3} g_Y)$.

We fix a constant $\eta>0$ so small that
\begin{gather}
2\eta < \tfrac12 \label{eq.c0}, \\
\big(2^{\frac13}-1\big) \big(1-\pi \cdot(2\eta)^{2}\big) - \pi (2\eta)^{2} >0 . \label{eq.c2}
\end{gather}
Because of \eqref{eq.c0}, $2\eta$ is smaller than the injectivity radius of the torus, so the closed ball $B^{2}(2\eta)\subset\R^{2}$ of radius $2\eta$ about the origin maps isometrically onto its image under the projection $\R^{2}\to\R^{2}/\Z^{2}$.
Denote the image of the origin in the torus by $o$.

We choose a smooth function $\psi\colon\R\to[0,1]$ with $\psi(r)=0$ for $r\le\eta$ and $\psi(r)=1$ for $r\ge2\eta$.
We modify the metric $g_a$ and define
\[
\tg_a := dt^2 + \bigg(1-\psi(r) + \frac{\psi(r)t}{a}\bigg)^{\frac{4}{3}} g_Y
\]
on $[a,2a]\times Y$.
Here $r$ denotes the distance from $o$ in $Y_t$ with respect to the metric $g_Y$.

Outside the $2\eta$-neighborhood of the curve $[a,2a]\times \{o\}$, the metrics $\tg_a$ agree with $g_a$.
Inside the $\eta$-neighborhood of $[a,2a]\times \{o\}$, the metric $\tg_a$ takes the form $\tg_a= dt^2 + g_Y$ and is flat and independent of $a$.
Inside this neighborhood, $r$ is the distance from $o$ not only for the metric $g_Y$ but also for the metric $\tg_a$.

We choose a smooth profile curve $\gamma$ in the upper half-plane $\{(t,r)\in\R^2\mid r>0\}$ which consists of the graph of a smooth positive function $r=r(t)$ for $t\in(0,1)$ and the two rays $\{0\}\times[\eta,\infty)$ and $\{1\}\times[\eta,\infty)$.
Thus $t\mapsto r(t)$ has a continuous extension to $[0,1]$ with $r(0)=r(1)=\eta$.
We define the handle $Z:=\{(t,x,y)\in [0,1]\times\R^2\mid x^2+y^2 \le r(t)^2\}$, see Figure~\ref{fig.handle}.
We equip $Z$ with the flat metric $dt^2 + dx^2+dy^2$.
Note that $Z$ and its metric are independent of $a$.
\begin{figure}[ht]
\centering
\begin{overpic}[width=.5\textwidth]{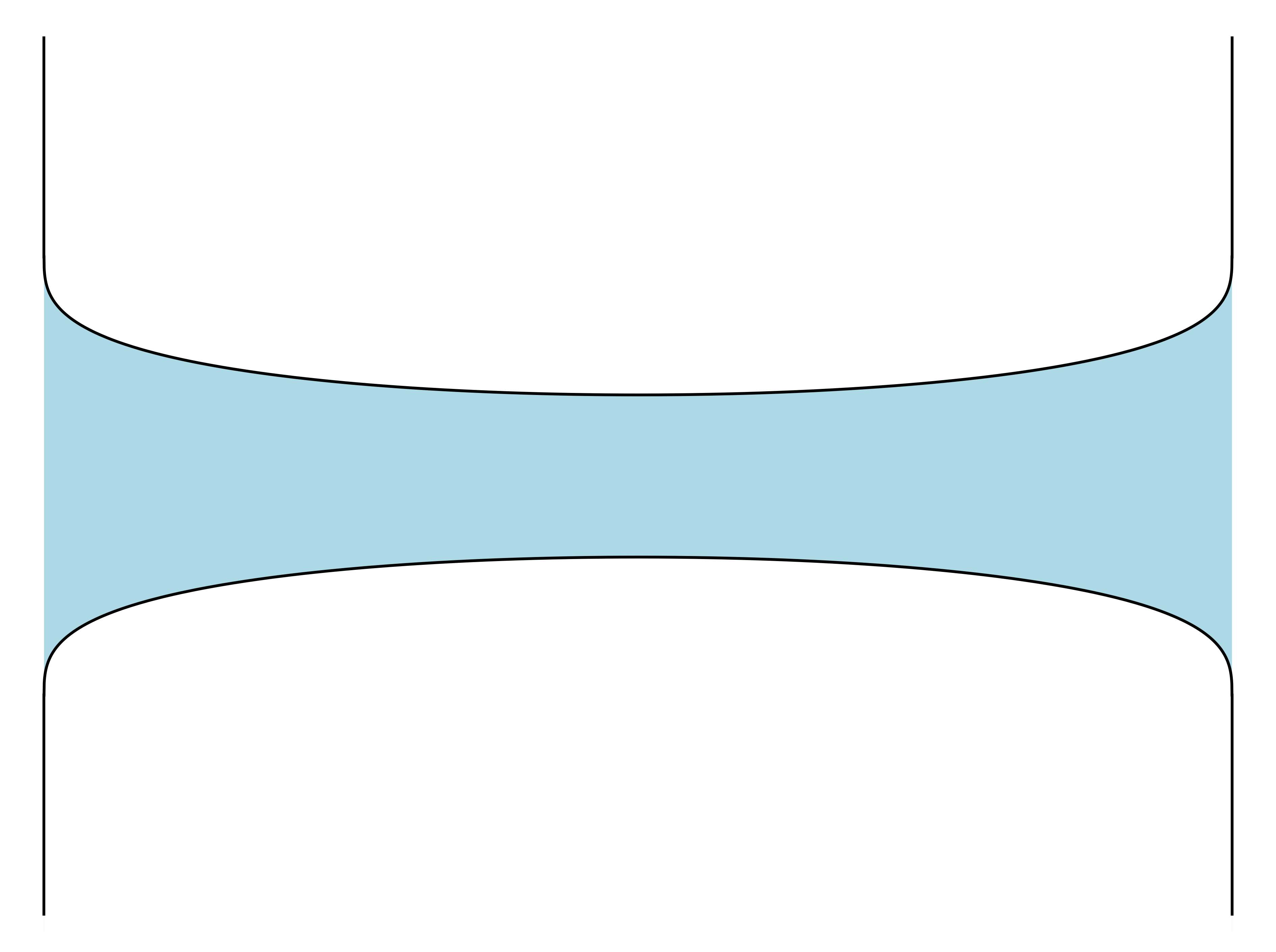}
\put(5,60){$\gamma$}
\put(48,36){\textcolor{blue}{$Z$}}
\end{overpic}
\caption{The handle $Z$}
\label{fig.handle}
\end{figure}

We define the lateral boundary of $Z$ as $\partial_\mathrm{lat} Z := \{(t,x,y)\in [0,1]\times\R^2\mid x^2+y^2 = r(t)^2\}$.
Note that $\{0\}\times \intOp(B^{2}(\eta))$ and  $\{1\}\times \intOp(B^{2}(\eta))$ are not part of $\partial_\mathrm{lat} Z$.

We now perform surgery at the two boundary components of $[a,2a]\times Y$ by attaching $Z$ so that $\{a\} \times B^{2}(\eta)\subset [a,2a]\times Y$ is identified with $\{0\}\times B^{2}(\eta)\subset Z$ and $\{2a\} \times B^{2}(\eta)\subset [a,2a]\times Y$ is identified with $\{1\}\times B^{2}(\eta)\subset Z$.
Denote the resulting Riemannian manifold by $\hat{X}$ and the resulting metric by $\hat{g}_a$, see Figure~\ref{fig.connected}.
The boundary of $\hat{X}$ is smooth because $\gamma$ is smooth and it is given by the connected sum of the two boundary tori $\partial \hat{X} = Y \# Y$.
The induced metric on the boundary does not depend on $a$.
\begin{figure}[ht]
\centering
\begin{overpic}[width=.5\textwidth]{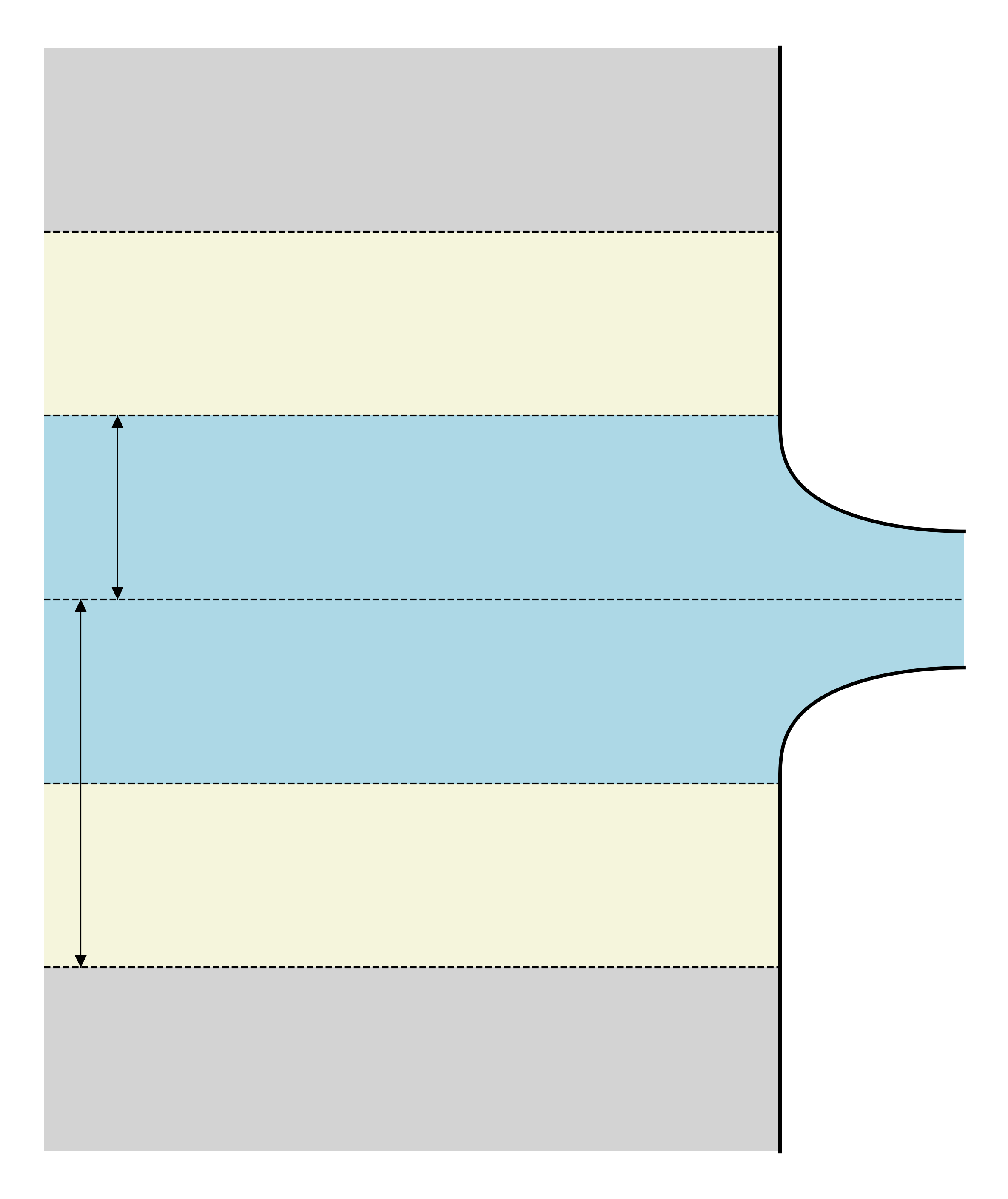}
\put(6,57){$\eta$}
\put(8,30){$2\eta$}
\put(13,42){\textcolor{blue}{$dt^2+dx^2+dy^2=dt^2+g_Y$}}
\put(14,71){\textcolor{black}{$dt^2 + \Big(1-\psi + \frac{\psi t}{a}\Big)^{\frac{4}{3}} g_Y$}}
\put(35,88){$g_a$}
\end{overpic}
\caption{Gluing $Z$ to $X$}
\label{fig.connected}
\end{figure}

It remains to investigate the scalar curvature of $\hat{g}_a$ and the total mean curvature of the boundary $\partial \hat{X}$.

\subsection*{Step 2: The mean curvature}

We start with the mean curvature of the boundary.
The boundary consists of five regions: 
\[
\partial \hat{X} = (Y_a \setminus B^{2}(2\eta)) \cup A^{2}_a(2\eta) \cup (Y_{2a} \setminus B^{2}(2\eta)) \cup A^{2}_{2a}(2\eta) \cup \partial_\mathrm{lat} Z .
\]
Here $A^{2}_a(2\eta)$ and $A^{2}_{2a}(2\eta)$ denote the annuli with outer radius $2\eta$ and inner radius $\eta$ in the boundary components $Y_a$ and $Y_{2a}$, respectively.

Near $Y_a \setminus B^{2}(2\eta)$ and $Y_{2a} \setminus B^{2}(2\eta)$, the metric $\hat{g}_a$ agrees with $g_a$ and the mean curvature is pointwise the same as in the proof of Theorem~\ref{thm.highdim}.
For the integral of the mean curvature over these regions, we therefore have by \eqref{eq.TotalMean}
\begin{align}
\int_{Y_{2a} \setminus B^{2}(2\eta)} H_{\hat{g}_a} 
&=
\frac{\vol(Y_{2a})-\vol(B^{2})(2\eta)}{\vol(Y_{2a})} \int_{Y_{2a}} H_{g_{a}} 
=
\big(1-\pi \cdot(2\eta)^{2}\big)\frac{4}{3a}\,2^{\frac{1}{3}} 
\label{eq.region1}
\end{align}
and similarly
\begin{align}
\int_{Y_{a} \setminus B^{2}(2\eta)} H_{\hat{g}_a} 
&=
-\big(1-\pi \cdot(2\eta)^{2}\big)\frac{4}{3a}\,1^{\frac{1}{3}}  
=
-\big(1-\pi \cdot(2\eta)^{2}\big)\frac{4}{3a} .
\label{eq.region2}
\end{align}
Again, the negative sign in \eqref{eq.region2} comes from the fact that the normal $\frac{\partial}{\partial t}$ is inward pointing along $Y_a$.
Adding \eqref{eq.region1} and \eqref{eq.region2} gives
\begin{align}
\int_{Y_{2a} \setminus B^{2}(2\eta)} H_{\hat{g}_a} + \int_{Y_a \setminus B^{2}(2\eta)} H_{\hat{g}_a} 
&=
\big(2^{\frac{1}{3}}-1\big) \big(1-\pi \cdot(2\eta)^{2}\big)\frac{4}{3a} .
\label{eq.regions1and2}
\end{align}

As to the regions $A^{2}_a(2\eta)$ and $A^{2}_{2a}(2\eta)$, we compute the mean curvature along each slice $Y_t$ for $t\in[a,2a]$:
\begin{align*}
H_{\hat{g}_a}(t,r)
&=
\frac{\partial}{\partial t}\log\bigg[\bigg(1-\psi(r) + \frac{\psi(r)t}{a}\bigg)^{\frac43}\bigg] 
=
\frac{4}{3}\frac{\frac{\psi(r)}{a}}{1-\psi(r) + \frac{\psi(r)t}{a}} .
\end{align*}
Thus we have
\[
0 \le H_{\hat{g}_a}(t,r) \le \frac{4}{3t} .
\]
Since the contribution of $A^{2}_{2a}(2\eta)$ to the total mean curvature is nonnegative, we have
\begin{align}
\int_{A^{2}_a(2\eta)\cup A^{2}_{2a}(2\eta)} H_{\hat{g}_a} 
&\ge
-\int_{A^{2}_a(2\eta)} H_{\hat{g}_a}(a,r) 
\ge 
- \frac{4}{3a}\pi (2\eta)^{2} . 
\label{eq.regions3and4}
\end{align}

The contribution of the last region $\partial_\mathrm{lat} Z$ to the total mean curvature is independent of $a$ and we simply denote it by
\begin{equation}
\int_{\partial_\mathrm{lat} Z} H_{\hat{g}_a} 
=:
h_0.
\label{eq.region5}
\end{equation}

Adding \eqref{eq.regions1and2}, \eqref{eq.regions3and4}, and \eqref{eq.region5} gives
\begin{align*}
\int_{\partial \hat{X}} H_{\hat{g}_a}
&\ge
\Big[\big(2^{\frac13}-1\big) \big(1-\pi \cdot(2\eta)^{2}\big) - \pi (2\eta)^{2}\Big]\frac{4}{3a} + h_0
\end{align*}
which tends to $\infty$ as $a\searrow0$ because of \eqref{eq.c2}.

The minimum of the mean curvature tends to $-\infty$ as $a\searrow0$ because on $Y_a \setminus B^{2}_a(2\eta)$, the mean curvature is still given by \eqref{eq.Hnegative}.

\subsection*{Step 3: The scalar curvature}

It remains to prove that the scalar curvature of $\hat{g}_a$ is bounded independently of $a$.
The scalar curvature may be nonzero only in the annular region $[a,2a]\times A^{2}(2\eta)$, i.e., when $r\in[\eta,2\eta]$.
This is because for $r<\eta$, the metric is flat and for $r>2\eta$, the metric agrees with $g_a$ which has vanishing scalar curvature.

Using polar coordinates on the Euclidean disk $B^{2}$, we can identify the annular region with $[a,2a]\times[\eta,2\eta]\times S^{1}$ with the metric
\[
dt^2 + \bigg(1-\psi(r) + \frac{\psi(r)t}{a}\bigg)^{\frac{4}{3}} \big(dr^2 + r^2 g_{S^{1}}\big) 
\]
where $g_{S^{1}}$ is the standard metric on the unit sphere $S^{1}$.
Thus the annular region is a warped product with fiber $(S^{1},g_{S^{1}})$ and base 
\[
(B,g_B)=\bigg([a,2a]\times[\eta,2\eta],dt^2 + \bigg(1-\psi(r) + \frac{\psi(r)t}{a}\bigg)^{\frac{4}{3}} dr^2\bigg)
\]
and warping function
\[
f(t,r)=\bigg(1-\psi(r) + \frac{\psi(r)t}{a}\bigg)^{\frac{2}{3}} r .
\]
The scalar curvature of a warped product over a surface $B$ with fiber $F$ is given by
\begin{equation}
\ScalOp_{B\times_f F} 
= 
2K_B + \frac{\ScalOp_F}{f^2} + 2\dim(F)\frac{\Delta_B f}{f} - \dim(F)(\dim(F)-1)\frac{|df|^2_B}{f^2}.
\end{equation}
Here $K_B$ is the Gauss curvature of the base surface $B$ and $\Delta_B=d^*d$ is the Laplace-Beltrami operator of $B$, see \cite{DD}*{Theorem~2.1}.\footnote{Note the opposite sign convention for the Laplacian in \cite{DD}.}
For our annular region, this reduces to
\begin{align}
\ScalOp_{\hat{g}_a}
&=
2K_B + 2\frac{\Delta_B f}{f}.
\label{eq.ScalSurfaceWarp}
\end{align}
We compute the ingredients of this formula.
By the standard formula for the Gauss curvature in Fermi coordinates, we find
\begin{equation}
K_B 
=
-\frac{\partial_t^2(f/r)}{f/r}
=
-\frac{\partial_t^2f}{f} .
\label{eq.KB}
\end{equation}
The general formula
\[
\Delta u 
=
-\det(g_{\cdot\cdot})^{-1/2} \sum_{i,j} \partial_i\big(\det(g_{\cdot\cdot})^{1/2} g^{ij} \partial_j u\big)
\]
for the Laplacian gives for $B$:
\begin{align*}
\Delta_Bu
&=
-\frac{r}{f(t,r)}\bigg[\partial_t\bigg(\frac{f(t,r)}{r} \partial_t u\bigg) + \partial_r\bigg(\frac{r}{f(t,r)} \partial_r u\bigg)\bigg] .
\end{align*}
Inserting $u=f$ gives
\begin{align}
\Delta_B f
&=
-\frac{r}{f}\bigg[\frac{(\partial_tf)^2 + f\partial_t^2f}{r} + \frac{\partial_r(r\partial_rf)f - r(\partial_rf)^2}{f^2} \bigg] \notag\\
&=
-\bigg[\partial_t^2f + \frac{(\partial_tf)^2}{f} + \frac{r\partial_rf}{f^2} + \frac{r^2\partial_r^2f}{f^2} - \frac{r^2(\partial_rf)^2}{f^3}\bigg] .
\label{eq.LaplaceTerm}
\end{align}
Dividing \eqref{eq.LaplaceTerm} by $f$ and adding \eqref{eq.KB} gives the scalar curvature of the annular region via \eqref{eq.ScalSurfaceWarp}:
\begin{align}
\frac{\ScalOp_{\hat{g}_a}}{2}
&=
-\bigg[2\frac{\partial_t^2f}{f} + \frac{(\partial_tf)^2}{f^2} + \frac{r\partial_rf}{f^3} + \frac{r^2\partial_r^2f}{f^3} - \frac{r^2(\partial_rf)^2}{f^4}\bigg] .
\label{eq.ScalHalbe}
\end{align}
We define $q(t,r):=1-\psi(r) + \frac{\psi(r)t}{a} = 1 + \psi(r)(\frac{t}{a}-1)$ so that $f(t,r)=q(t,r)^{2/3}r$.
Then we have 
\[
\partial_t f = \tfrac23 \cdot q^{-\frac13} \cdot\tfrac{\psi}{a}\cdot r
\quad\text{and}\quad
\partial_t^2 f = -\tfrac29 \cdot q^{-\frac43} \cdot\tfrac{\psi^2}{a^2}\cdot r
\]
and hence 
\[
(\partial_t f)^2 = -2f\partial_t^2 f.
\]
Therefore, the first two terms in the right-hand side of \eqref{eq.ScalHalbe} cancel and \eqref{eq.ScalHalbe} simplifies to:
\begin{align}
\frac{\ScalOp_{\hat{g}_a}}{2}
&=
-\bigg[\frac{r\partial_rf}{f^3} + \frac{r^2\partial_r^2f}{f^3} - \frac{r^2(\partial_rf)^2}{f^4}\bigg] .
\label{eq.ScalHalbe2}
\end{align}
The $r$-derivatives of $f$ are given by
\begin{align*}
\partial_r f
&=
\tfrac23 \cdot q^{-\frac13} \cdot \psi' \cdot (\tfrac{t}{a}-1) \cdot r + q^{\frac23} ,\\
\partial_r^2 f
&=
-\tfrac29 \cdot q^{-\frac43} \cdot (\psi')^2 \cdot (\tfrac{t}{a}-1)^2 \cdot r 
+ \tfrac{2}{3}\cdot q^{-\frac13} \cdot \psi''\cdot (\tfrac{t}{a}-1) \cdot r 
+ \tfrac{4}{3}\cdot q^{-\frac13}\cdot \psi'\cdot (\tfrac{t}{a}-1) .
\end{align*}
Now observe that $a\le t \le 2a$ and $0\le \psi\le 1$ imply that 
\begin{gather*}
0\le \tfrac{t}{a} - 1 \le 1, \\
1\le q \le 2, \\
1 \le \frac{f}{r} \le 2^{\frac23} .
\end{gather*}
This proves that $|\partial_r f|$ and $|\partial_r^2 f|$ are bounded independently of $a$.
Moreover, from $\eta\le r\le 2\eta$ we have 
\[
\eta \le f \le 2^{\frac53}\cdot \eta .
\]
Hence $|\ScalOp_{\hat{g}_a}|$ is bounded independently of $a$.
This concludes the proof of Theorem~\ref{thm.connected}.

\end{document}